\def\'#1{\ifx#1i{\accent"13 \i}\else{\accent"13 #1}\fi}
\def\P{{\rm I\!P}}
\def\L{{\rm I\!L}}
\def\l{{\rm I\!l}}
\newtheorem{theorem}{Theorem}
\newtheorem{lemma}{Lemma}
\newtheorem{proposition}{Proposition}
\theoremstyle{definition}
\newtheorem{definition}{Definition}
\title{On the pseudoachromatic index of the complete graph III
\thanks{Research supported by Research by CONACyT-M{\' e}xico under projects 178395, 166306, and PAPIIT-M{\' e}xico under project IN104915.}}
\author{\sc M. Gabriela Araujo-Pardo \\ \url{garaujo@math.unam.mx} \and \sc Juan Jos{\' e} Montellano-Ballesteros \\ \url{juancho@math.unam.mx} \and \sc Christian Rubio-Montiel\\ \url{christian@math.unam.mx} \and \sc Ricardo Strausz \\ \url{dino@math.unam.mx} \and --------------------------------------------------------- \\ Instituto de Matem{\' a}ticas \\ Universidad Nacional Aut{\' o}noma de M{\' e}xico \\ Ciudad Universitaria, 04510, M{\' e}xico D.F.\\ ---------------------------------------------------------
}
\begin{document}
\maketitle

\begin{abstract} Let $ \Pi_q $ be the projective plane of order $ q $, let $\psi(m):=\psi(L(K_m))$ the pseudoachromatic number of the complete line graph of order $ m $, let $ a\in \{ 3,4,\dots,\tfrac{q}{2}+1 \} $ and  $ m_a=(q+1)^2-a $. 

In this paper, we improve the upper bound of $ \psi(m) $ given by Araujo-Pardo et al. [J Graph Theory 66 (2011), 89--97] and Jamison [Discrete Math. 74 (1989), 99--115] in the following values: if $ x\geq 2 $ is an integer and $m\in \{4x^2-x,\dots,4x^2+3x-3\}$ then
$\psi(m) \leq 2x(m-x-1)$.

On the other hand, if $ q $ is even and there exists $ \Pi_q $ we give a complete edge-colouring of $ K_{m_a} $ with $(m_a-a)q$ colours.  Moreover, using this colouring we extend the previous results for $a=\{-1,0,1,2\}$  given by Araujo-Pardo et al. in [J Graph Theory 66 (2011), 89--97] and  [Bol. Soc. Mat. Mex. (2014) 20:17--28]
proving that $\psi(m_a)=(m_a-a)q$ for $ a\in \{3,4,\dots,\left\lceil \frac{1+\sqrt{4q+9}}{2}\right\rceil -1 \} $.

\end{abstract}

\section{Introduction}

The \emph{pseudoachromatic number} of a graph $\psi(G)$, which is the number of colours in a maximum complete vertex-colouring of $G$, has attracted the attention of several researchers since its introduction by Gupta \cite{G} in 1969 (see also \cite{MR0439672,MR0357198}). Being a hard parametre to calculate, it is in order to search for bounds in general classes of graphs. In this series of papers \cite{AMS,AMRS}, we endeavour to calculate the exact value of the \emph{pseudoachromatic index} of the complete graph $K_m$, for a wide set of values of $m$ (see also \cite{MR0384589,B,HPW,J,S,TRJL}), which we denote by 
    \[\psi(m):=\psi(L(K_m))\]

More precisely, on the one hand we study in detail the interaction of a couple of natural upper bounds for $\psi(m)$ in terms of the size of chromatic classes in a complete edge colouring of $K_m$; namely, if $x$ is the size of its smallest chromatic class, then (cf., \cite{J})
    \[\psi(m)\leq{\rm min}\left\{2x(m-x-1)+1, \left\lfloor \frac{\binom{m}{2}}{x+1}\right\rfloor\right\}.\]
Each of these bounds by themselves does not give much information of the phenomena, but together they seem to dominate precisely what is going on. Therefore, we study those intervals of values of $m$ where each of these bounds dominates the phenomena; viz., denoting by
    \[g_m(x):=2x(m-x-1)+1\qquad {\rm and }\qquad f_m(x):=\left\lfloor \frac{\binom{m}{2}}{x+1} \right\rfloor,\]
it will be proven the following

\begin{theorem}
\label{cor3} 
If $x\geq2$ is an integer, then
\[
\psi(m)\leq\begin{cases}
g_{m}(x)-1 & m\in\left\{ 4x^{2}-x,\dots,4x^{2}+3x-3\right\} \\
g_{m}(x) & m\in\left\{ 4x^{2}+3x-2,4x^{2}+3x-1\right\} \\
f_{m}(x) & m\in\left\{ 4x^{2}+3x,\dots,4(x+1)^{2}-(x+1)-1\right\}.
\end{cases}
\]
\end{theorem}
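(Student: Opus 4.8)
The plan is to combine the two recalled bounds through a dichotomy on the size of a smallest colour class, and then, on the first range only, to save one unit by inspecting the extremal configuration that makes the larger of the two bounds tight.

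Fix $m$ and let $x$ be the integer attached to $m$ by the range in question. Let $\mathcal C$ be an optimal complete edge-colouring of $K_m$, using $\psi(m)$ colours, and let $x_0$ be the number of edges in a smallest colour class. Since $g_m$ is strictly increasing on $\{1,\dots,\lfloor(m-1)/2\rfloor\}$: if $x_0\le x$, the neighbourhood bound gives $\psi(m)\le g_m(x_0)\le g_m(x)$; and if $x_0\ge x+1$, a plain edge count gives $\psi(m)(x+1)\le\psi(m)x_0\le\binom m2$, hence $\psi(m)\le f_m(x)$. So $\psi(m)\le\max\{g_m(x),f_m(x)\}$ in all cases; moreover, running the same dichotomy with $x$ replaced by $x-1$ (resp.\ $x+1$) gives $\psi(m)\le f_m(x-1)$ (resp.\ $\psi(m)\le g_m(x+1)$) in the regimes where those are valid, so altogether $\psi(m)\le\min\{g_m(x),f_m(x-1)\}$ on the first two ranges and $\psi(m)\le\min\{g_m(x+1),f_m(x)\}$ on the third.

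The next step is a purely computational comparison. Writing $f_m(x)\ge g_m(x)$ as $\binom m2\ge(x+1)\bigl(2x(m-x-1)+1\bigr)$ and expanding yields an upward quadratic in $m$ whose relevant root is $\tfrac12\bigl((2x+1)^2+\sqrt{16x^4+16x^3-8x^2+9}\bigr)$, which for $x\ge2$ lies strictly between $4x^2+3x-1$ and $4x^2+3x$; the slick way is to use $4x^2+3x=x(4x+3)$ and $4x^2+3x-1=(4x-1)(x+1)$ to evaluate $f_m(x)$ in closed form at $m=4x^2+3x$ and $m=4x^2+3x-1$. A similar computation shows $g_m(x)\le f_m(x-1)$ iff $m\ge4x^2-x$. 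These facts give $\psi(m)\le f_m(x)$ on $\{4x^2+3x,\dots,4(x+1)^2-(x+1)-1\}$ (the third line) and $\psi(m)\le g_m(x)$ on $\{4x^2-x,\dots,4x^2+3x-1\}$ (in particular the middle line), and they pin down the endpoints $4x^2-x$, $4x^2+3x-1$, $4x^2+3x$.

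It remains to exclude $\psi(m)=g_m(x)$ on $\{4x^2-x,\dots,4x^2+3x-3\}$. Suppose $\psi(m)=g_m(x)$. Since $f_m(x)<g_m(x)$ here, the case $x_0\ge x+1$ cannot occur, so $x_0=x$ and the neighbourhood bound is attained; tracing its proof, equality forces a rigid configuration — a smallest class $C$ is a perfect matching on a set $V(C)$ of exactly $2x$ vertices, the $2x(m-x-1)$ edges meeting $V(C)$ but not in $C$ carry $2x(m-x-1)$ pairwise distinct colours all $\ne C$, and hence every colour other than $C$ has exactly one edge meeting $V(C)$ and all its remaining (at least $x-1$) edges inside $W:=V(K_m)\setminus V(C)$. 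I would then derive a contradiction from the traces of these colours on $W$: completeness together with the rainbow-link condition forces the traces of any two colours whose representative edges in $V(C)$ are vertex-disjoint to share a vertex of $W$, so organising the traces into the $2x$ (almost) cross-intersecting families indexed by the vertices of $V(C)$, together with those coming from edges inside $V(C)$, becomes incompatible with the fact that each of these $\psi(m)-1$ colours already owns at least $x-1$ of the only $\binom{m-2x}{2}$ edges of $K_W$. Turning this incompatibility into a sharp inequality, and verifying that it just degenerates at $m=4x^2+3x-2$ (so that on $\{4x^2+3x-2,4x^2+3x-1\}$ only $\psi(m)\le g_m(x)$ survives), is the main obstacle and is what fixes the endpoint $4x^2+3x-3$.
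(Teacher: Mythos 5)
Your first two cases are fine: the dichotomy on the smallest class size $x_0$ (if $x_0\le x$ then $\psi(m)\le g_m(x_0)\le g_m(x)$ by monotonicity, if $x_0\ge x+1$ then $\psi(m)\le f_m(x)$), combined with locating the crossover of $f_m(x)$ and $g_m(x)$ between $4x^2+3x-1$ and $4x^2+3x$, recovers exactly what the paper simply quotes from Jamison (Proposition~\ref{prop1}). The genuine content of Theorem~\ref{cor3}, however, is the improvement $\psi(m)\le g_m(x)-1$ on $\{4x^2-x,\dots,4x^2+3x-3\}$, and there your proposal has a real gap: you set up the correct rigid configuration (smallest class $C$ of size exactly $x$ is a perfect matching, the $2x(m-x-1)$ edges meeting $V(C)$ are rainbow, every other class has exactly one edge at $V(C)$ and at least $x-1$ edges inside $W=V(K_m)\setminus V(C)$), but you then only gesture at a contradiction via cross-intersecting traces on $W$, and you explicitly concede that turning this into a sharp inequality pinned at $4x^2+3x-3$ is ``the main obstacle.'' That step cannot be waved through: the underlying count you invoke, namely $(x-1)\cdot 2x(m-x-1)\le\binom{m-2x}{2}$, is actually satisfied throughout the range (e.g.\ at $m=4x^2-x$ the left side is $8x^4-12x^3+2x^2+2x$ and the right side is $8x^4-12x^3+\tfrac{5}{2}x^2+\tfrac{3}{2}x$), so no contradiction arises and a substantially different refinement is required.

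For comparison, the paper's argument does not count edges inside $W$ at all. It first shows every class has at least $x$ edges, then lower-bounds the number of classes of size exactly $x$ by $g_m(x)-\bigl(\binom{m}{2}-x\,g_m(x)\bigr)$ and proves (first quadratic in $m$) that this is at least $2$ precisely on $\{4x^2-x,\dots,4x^2+3x-3\}$. Taking two size-$x$ classes $C$ and $C'$ meeting at a vertex $u$, it produces many pairwise distinct classes of size greater than $x$: the $\binom{2x}{2}-x$ non-matching edges inside $\langle C\rangle$, plus $2x-2$ further edges at the shared vertex for each additional size-$x$ class, all of distinct colours because the edges meeting $V(C)$ are rainbow. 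Comparing this with the upper bound $\binom{m}{2}-x\,g_m(x)$ on the number of classes of size greater than $x$ (excess-edge count) yields a second quadratic inequality with no integer solutions in the range, giving the contradiction. Without an argument of this kind (or a worked-out substitute for your intersecting-families idea), your proposal establishes only Jamison's bounds, not the stated theorem.
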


On the other hand, using results given in \cite{AMS,AMRS} and supported by the combinatorial structure of projective planes of even order, we exhibit optimal complete edge colourings of $K_m$; we show that

\begin{theorem}
\label{teo1}
Let $q\geq4$ be an even natural number, and let $n=q^2+q+1$, $ a\in \{-1,0,\dots,\frac{q}{2}+1 \} $ and $ m_a=n+q-a $. If the projective plane $ \Pi_q $ of order $ q $ exists then \[\psi(m_a) \geq (m_a-a)q.\]
\end{theorem}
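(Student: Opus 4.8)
The statement is a lower bound for $\psi(m_a)=\psi(L(K_{m_a}))$, so the plan is to exhibit an explicit \emph{complete} edge-colouring of $K_{m_a}$ using exactly $(m_a-a)q$ colours; recall that an edge-colouring is complete precisely when, for every pair of colour classes $C$ and $C'$, the vertex sets covered by $C$ and by $C'$ have a common element. We construct such a colouring from the incidence geometry of the projective plane $\Pi_q$, generalising the constructions of Araujo-Pardo et al.~\cite{AMS,AMRS}, which settle the cases $a\in\{-1,0,1,2\}$.

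Put
\[ m_a = n+(q-a), \qquad (m_a-a)q=(q+1)^2q-2aq,\qquad n:=q^2+q+1, \]
so that $q-a\ge\tfrac q2-1\ge 1$ since $a\le\tfrac q2+1$ and $q\ge 4$. Take the $n$ points of $\Pi_q$ as $n$ of the vertices of $K_{m_a}$ and adjoin $q-a$ auxiliary vertices. The $n$ lines of $\Pi_q$ partition the edges of the complete graph on the points into $n$ cliques $K_{q+1}$, and any two of these cliques meet in exactly one vertex: the point common to the two lines. This last fact is the reason projective planes enter — it is what will force colour classes supported on two different lines to be adjacent. Since $q$ is even, $q+1$ is odd, so each line-clique $K_{q+1}$ has a proper edge-colouring into $q+1$ matchings, each omitting exactly one of its $q+1$ points; equivalently, adjoining one vertex to a line yields a $K_{q+2}$ of even order that factorises into $q+1$ perfect matchings. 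The idea is to organise the colour classes so that the vertex set of each class contains a whole line of $\Pi_q$ (or at least the intersection point of every other line it must meet), and to use the $q-a$ auxiliary vertices both to absorb the edges that are internal to no line and to repair the sole obstruction of the naive "one colour per near-one-factor" scheme, namely a pair of lines through a common point $p$ whose chosen matchings both omit $p$: each such deficient pair is made adjacent by routing one of the two classes through an auxiliary vertex.

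What remains is to verify completeness and the colour count. Completeness splits into cases: two classes on distinct lines are adjacent at the common point of those lines; two classes on the same line-clique share at least $q-1$ of its $q+1$ points (or all $q+2$ vertices of the associated $K_{q+2}$); the repaired deficient pairs are adjacent by construction; and the pairs of classes involving the $\binom{q-a}{2}+(q-a)n$ edges incident to auxiliary vertices are controlled by the explicit routing of those edges. For the count, one checks that the total number of colours comes to $(q+1)^2q$ in the baseline case $a=0$ and changes by exactly $2q$ for each auxiliary vertex removed or added, giving $(q+1)^2q-2aq=(m_a-a)q$; it is the closing-up of this bookkeeping that explains why $a$ may range only up to $\tfrac q2+1$.

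The crux of the argument is the completeness verification together with the bookkeeping that pins the count to $(m_a-a)q$ on the nose. The colour classes produced here are small — matchings covering roughly $q$ of the $m_a\approx q^2$ vertices — so a \emph{typical} pair of classes would have disjoint vertex sets; only the fact that two lines of $\Pi_q$ always meet saves the construction, and one must check that this intersection property is genuinely inherited by every colour class, including after the auxiliary-vertex repairs. Carrying this out uniformly for all admissible $a$, while partitioning every edge of $K_{m_a}$ exactly once and confirming that the repairs introduce no new deficient pairs, is the technical heart of the proof and the place where the combinatorial structure of $\Pi_q$ for $q$ even is used in full.
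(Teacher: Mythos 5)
Your plan is the same in outline as the paper's (identify the points of $\Pi_q$ with $n$ of the vertices, use the line-cliques $K_{q+1}$ and their near-one-factorizations, exploit the fact that two lines meet in a point for completeness, and use the $q-a$ extra vertices to patch what the line-cliques cannot provide), but as written it is a programme rather than a proof: the entire technical content is deferred. You never specify the colouring — how many colours each line carries, which near-perfect matchings are merged or truncated, how the $\binom{q-a}{2}+(q-a)n$ edges meeting the auxiliary vertices are distributed, or how the edges of the special structure around a distinguished point and line are absorbed — and you yourself flag this as ``the technical heart.'' In particular, your proposed fix for the one obstruction you do identify (two classes in lines through a point $p$ both omitting $p$) is to ``route one of the two classes through an auxiliary vertex,'' with no argument that only $q-a$ auxiliary vertices suffice to repair all such pairs simultaneously, that the repairs do not create new deficient pairs, that every edge of $K_{m_a}$ is still coloured exactly once, and that the colour count still lands exactly on $(m_a-a)q$; the arithmetic identity $(m_a-a)q=(q+1)^2q-2aq$ is correct but is not evidence that any construction achieves it.

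The paper's proof avoids this case-by-case adjacency analysis by establishing a stronger invariant: colours are partitioned into sets $\mathcal{C}_i$ attached to the lines (of sizes $q$, $q-1$ or $q+1$ depending on the role of the line — not one colour per near-one-factor), and the colouring is arranged so that each line $l_i$ is an \emph{owner} of $\mathcal{C}_i$, meaning every vertex of $l_i$ is incident with an edge of every colour of $\mathcal{C}_i$. With that invariant, completeness for two colours on distinct lines is immediate at the intersection point (Lemma~\ref{owner}), with no deficient pairs to repair; the missing colours at individual vertices of a line are supplied in advance by carefully prescribed edge sets going to the auxiliary clique $H$ and to a distinguished line $\ell$ and point $v_0$, using the five colouring types defined in the paper. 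If you want to complete your argument along your own lines, you must either prove such an ownership-type invariant or give an explicit, exhaustive description of the classes and verify pairwise adjacency and the exact count for every admissible $a\in\{3,\dots,\tfrac q2+1\}$; as it stands, the step from the sketch to a complete colouring with exactly $(m_a-a)q$ colours is missing.
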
  

With these results together, we obtain the following set of exact values for the pseudoachromatic index of the complete graph

\begin{theorem}
\label{cor2}
Let $q>4$ be a power of $ 2 $, let $n=q^2+q+1$, $ a\in \{-1,0,\dots,\left\lceil \frac{1+\sqrt{4q+9}}{2}\right\rceil -1 \} $ and $ m_a=n+q-a $, then \[\psi(m_a) = (m_a-a)q .\]
\end{theorem}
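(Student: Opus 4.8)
The plan is to sandwich $\psi(m_a)$ between two matching bounds. The lower bound $\psi(m_a)\ge (m_a-a)q$ comes directly from Theorem~\ref{teo1}: since $q>4$ is a power of $2$ it is even and at least $8$, the projective plane $\Pi_q$ exists, and the admissible range here is contained in that of Theorem~\ref{teo1}, because $\left\lceil\tfrac{1+\sqrt{4q+9}}{2}\right\rceil-1\le \tfrac q2+1$; indeed $\tfrac{1+\sqrt{4q+9}}{2}\le\tfrac q2+2$ reduces, after clearing denominators and squaring, to $0\le q^2+2q$. So the whole content is the reverse inequality $\psi(m_a)\le (m_a-a)q$.

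For the upper bound I would invoke Theorem~\ref{cor3} with $x=q/2$; this is legitimate since $q$ is even and $q/2\ge 4\ge 2$. First one must place $m_a$ in the correct interval. Writing $m_a=(q+1)^2-a=q^2+2q+1-a$ and using $-1\le a\le \tfrac q2+1$, a one-line comparison gives
\[
4x^2+3x=q^2+\tfrac{3q}{2}\ \le\ m_a\ \le\ q^2+\tfrac{7q}{2}+2=4(x+1)^2-(x+1)-1,
\]
so the third case of Theorem~\ref{cor3} yields
\[
\psi(m_a)\le f_{m_a}(q/2)=\left\lfloor\frac{\binom{m_a}{2}}{\tfrac q2+1}\right\rfloor=\left\lfloor\frac{m_a(m_a-1)}{q+2}\right\rfloor .
\]

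The heart of the proof is the exact evaluation of this floor. Since $q+1\equiv-1\pmod{q+2}$, we have $m_a\equiv 1-a$ and $m_a-1\equiv-a\pmod{q+2}$, hence $m_a(m_a-1)\equiv a(a-1)\pmod{q+2}$. The hypothesis $a\le\left\lceil\tfrac{1+\sqrt{4q+9}}{2}\right\rceil-1$ is precisely what forces $a(a-1)<q+2$, since the positive root of $t^2-t-(q+2)$ is $\tfrac{1+\sqrt{4q+9}}{2}$; and $a(a-1)\ge 0$ for every integer $a\ge-1$ (in particular $a(a-1)=2$ when $a=-1$). Therefore $\left\lfloor m_a(m_a-1)/(q+2)\right\rfloor=(m_a(m_a-1)-a(a-1))/(q+2)$, and it remains only to check the polynomial identity $m_a(m_a-1)-a(a-1)=(m_a-a)\,q\,(q+2)$; substituting $m_a=q^2+2q+1-a$ and setting $u=q^2+2q$ this becomes $u(u+1-2a)=u(u+1-2a)$. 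Combining with the lower bound gives $\psi(m_a)=(m_a-a)q$.

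The only genuine obstacle is bookkeeping: one must verify that the interval membership of $m_a$ in Theorem~\ref{cor3}, the inequality $0\le a(a-1)<q+2$, the containment of the $a$-range inside that of Theorem~\ref{teo1}, and the admissibility of $x=q/2$ all hold uniformly over the stated range of $a$ (including the boundary values $a=-1$ and $a=\tfrac q2+1$). None of this is deep, but each point is needed so that both Theorem~\ref{teo1} and Theorem~\ref{cor3} may be applied as stated.
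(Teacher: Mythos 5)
Your proposal is correct and follows essentially the same route as the paper: the lower bound from Theorem~\ref{teo1}, the upper bound $\psi(m_a)\le f_{m_a}(q/2)$ from the third case of Theorem~\ref{cor3} with $x=q/2$, and the arithmetic showing the floor equals $(m_a-a)q$ exactly when $a(a-1)<q+2$, i.e.\ $a\le\left\lceil\frac{1+\sqrt{4q+9}}{2}\right\rceil-1$. The only (harmless) difference is that the paper defers the cases $a\in\{-1,0,1,2\}$ to \cite{AMS,AMRS}, while your uniform congruence computation handles them together with $a\ge 3$.
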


\section{Proof of Theorem \ref{cor3}}

In order to give the proof we will use the following proposition (see  \cite{J}).
\begin{proposition}
\label{prop1} 
If $x\geq2$ is an integer, then
\[
\psi(m)\leq\begin{cases}
g_{m}(x) & m\in\left\{ 4x^{2}-x,\dots,4x^{2}+3x-1\right\} \\
f_{m}(x) & m\in\left\{ 4x^{2}+3x,\dots,4(x+1)^{2}-(x+1)-1\right\}.
\end{cases}
\]
\end{proposition}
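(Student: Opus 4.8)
The plan is to obtain Proposition~\ref{prop1} as a worst-case consequence of the two monotone bounds recalled in the introduction. Fix an optimal complete edge-colouring of $K_m$ realising $\psi(m)$ colours and let $s\ge 1$ be the size of its smallest chromatic class (I write $s$ rather than $x$ to keep the true class size separate from the parameter $x$ of the statement). The introduction supplies $\psi(m)\le\min\{g_m(s),f_m(s)\}$, so it is enough to control $g_m(s)$ and $f_m(s)$ for the unknown integer $s$. Two monotonicity facts drive everything: $g_m(t)=2t(m-t-1)+1$ is strictly increasing in $t$ on $\{1,\dots,\lfloor(m-1)/2\rfloor\}$, a range that comfortably contains every $t\le x$ occurring here since $m\ge 4x^{2}-x$ forces $(m-1)/2>x$; and $f_m(t)=\lfloor\binom m2/(t+1)\rfloor$ is non-increasing in $t$ for all $t\ge 1$.

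With these in hand I would argue by a dichotomy on the unknown value of $s$. Suppose first $m\in\{4x^{2}-x,\dots,4x^{2}+3x-1\}$, where the target is $\psi(m)\le g_m(x)$. If $s\le x$, monotonicity of $g_m$ gives $\psi(m)\le g_m(s)\le g_m(x)$. If $s\ge x$, then $\psi(m)\le f_m(s)\le f_m(x)$, and it remains to check the clean inequality
\[
(\mathrm{I})\qquad f_m(x)\le g_m(x)\qquad\text{for } m\in\{4x^{2}-x,\dots,4x^{2}+3x-1\}.
\]
The range $m\in\{4x^{2}+3x,\dots,4(x+1)^{2}-(x+1)-1\}$ is handled symmetrically, splitting on $s\ge x$ (whence $\psi(m)\le f_m(s)\le f_m(x)$) versus $s\le x-1$ (whence $\psi(m)\le g_m(s)\le g_m(x-1)$), which reduces the target $\psi(m)\le f_m(x)$ to
\[
(\mathrm{II})\qquad g_m(x-1)\le f_m(x)\qquad\text{for } m\in\{4x^{2}+3x,\dots,4(x+1)^{2}-(x+1)-1\}.
\]
Since the two sub-cases exhaust all integers $s$, the proposition follows once $(\mathrm{I})$ and $(\mathrm{II})$ are established.

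It then remains to prove these two elementary inequalities, and here integrality lets me dispose of the floor at no cost: because $g_m(x)$ and $g_m(x-1)$ are integers, $(\mathrm{I})$ is implied by $\binom m2/(x+1)\le g_m(x)$ and $(\mathrm{II})$ is equivalent to $g_m(x-1)\le\binom m2/(x+1)$. Clearing the positive factor $2(x+1)$ turns each into a monic quadratic inequality in $m$: for $(\mathrm{I})$ one gets $Q_1(m):=m^{2}-(2x+1)^{2}m+2(x+1)(2x^{2}+2x-1)\le 0$, and for $(\mathrm{II})$ one gets $Q_2(m):=m^{2}-(4x^{2}-3)m+(4x^{3}-6x-2)\ge 0$. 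The whole content is now to place each interval correctly relative to the roots. For $(\mathrm{I})$, since $Q_1$ opens upward it suffices to verify $Q_1\le 0$ at both endpoints; I would compute $Q_1(4x^{2}+3x-1)=-3x(x+1)<0$ and $Q_1(4x^{2}-x)=-16x^{3}+9x^{2}+3x-2<0$, so the interval lies between the roots. For $(\mathrm{II})$, the vertex of $Q_2$ sits at $(4x^{2}-3)/2<4x^{2}+3x$, and $Q_2(4x^{2}+3x)=16x^{3}+21x^{2}+3x-2>0$; hence $Q_2>0$ on all of $[4x^{2}+3x,\infty)$, covering the whole interval.

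The main obstacle is precisely this boundary bookkeeping: one must show that the crossover between the incidence bound $g_m$ and the counting bound $f_m$ lands exactly at the threshold $m=4x^{2}+3x$, so that the larger root of $Q_1$ falls in $(4x^{2}+3x-1,\,4x^{2}+3x]$ while the larger root of $Q_2$ sits below $4x^{2}+3x$. Concretely, one is led to the suggestive evaluation $Q_1(4x^{2}+3x)=(x-2)(x+1)$, which vanishes exactly at $x=2$ and is positive for $x>2$, pinning the transition. None of the individual steps is deep—each reduces to a sign check of a quadratic across an interval, justified by the vertex lying outside that interval—but the delicate point, on which the exact form of the statement rests, is that the integer endpoints $4x^{2}-x$, $4x^{2}+3x$, and $4(x+1)^{2}-(x+1)-1$ align with the integer roots of $Q_1$ and $Q_2$, which is what makes the two regimes tile the integers without gap or overlap.
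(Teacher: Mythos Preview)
The paper does not prove Proposition~\ref{prop1}; it is quoted from Jamison~[J] and used as a black box in the proof of Theorem~\ref{cor3}. Your argument is the natural derivation and is essentially correct, but one step leans on a slightly misstated bound. You take from the introduction that $\psi(m)\le f_m(s)$ where $s$ is the size of a smallest colour class; as written this is too strong, since from ``every class has at least $s$ edges'' one only gets $\psi(m)\le\lfloor\binom{m}{2}/s\rfloor=f_m(s-1)$. (For instance, the $1$-factorisation of $K_4$ gives three classes of size $2$, yet $f_4(2)=\lfloor 6/3\rfloor=2<3$.)

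In your Case~1 this is harmless, because $s=x$ is already absorbed by the $g_m$ branch. In Case~2, however, your split ``$s\ge x$ versus $s\le x-1$'' sends $s=x$ to the $f_m$ branch, where the claimed $\psi(m)\le f_m(x)$ is not yet justified. The clean repair is to split Case~2 as $s\le x$ versus $s\ge x+1$: for $s\ge x+1$ every class has at least $x+1$ edges and $\psi(m)\le f_m(x)$ follows directly from counting; for $s\le x$ one has $\psi(m)\le g_m(s)\le g_m(x)$, and what is needed is $g_m(x)\le f_m(x)$ on $\{4x^{2}+3x,\dots,4x^{2}+7x+2\}$. But this is exactly $Q_1(m)\ge 0$, and your own evaluation $Q_1(4x^{2}+3x)=(x-2)(x+1)\ge 0$ together with the vertex of $Q_1$ sitting at $(4x^{2}+4x+1)/2<4x^{2}+3x$ already gives it. With this adjustment your inequality~(II) becomes unnecessary (though it is also true), and the proof is complete.
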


\begin{proof}[Proof of Theorem \ref{cor3}]

Let $ x\geq2 $ an integer  and let $n\in\{4x^{2}- x,\dots,4x^2 +3x-3\}$. By Proposition \ref{prop1}, we already know that $ \psi(n)\leq g_n(x) $. We will prove that $ \psi(n) \leq g_n(x)-1 $. To do this we suppose that $ \psi(n)=g_n(x) $ and finally arrive to a contradiction. Let $ \varsigma \colon V \rightarrow \{ 1,\dots,g_n(x) \} $ be a complete colouring. First of all we will prove that any class of colour can not have less than $x$ edges. Suppose there exists a colour class $C$ with $s$ edges such that $s<x$, then $C$ will be adjacent to at most $ \tbinom{s}{2}-s+2s(n-2s)=2s(n-s-1) $ edges, but $ 2s(n-s-1)<2x(n-x-1) $, in consequence $C$ could not be adjacent to all other colour classes. Then, each colour class has at least $x$ edges.  Suppose now that there exists a colour class $C$ with exactly $x$ edges. Then it is clear that $C$ is adjacent to exactly $ 2x(n-x-1) $ other edges and also they must all have different colours,  otherwise $C$ does not meet all the other colour classes --note that the only way to get this is when C is a matching. Since each colour class has at least $x$ edges, then the number of colour classes with more than $x$ edges is at most $ {\tbinom{n}{2}} - xg_n(x) $, hence, there are at least $ g_n(x)-\{ {\tbinom{n}{2}} - xg_n(x) \} $ colour classes with $ x $ edges.  Now we will see that there are at least two colour classes of size $x$. For this just observe that

$ 2 \leq g_n(x)-\{ {\tbinom{n}{2}} - x(g_n(x))\} $ if and only if $ n^2-(4x^2+4x+1)n+4x^3+8x^2+2x+2 \leq 0 $, \[ \textrm{i.e., }\left(n-\frac{4x^2+4x+1-\sqrt{D_1}}{2}\right)\left(n-\frac{4x^2+4x+1+\sqrt{D_1}}{2}\right) \leq 0 \]
where $ 4x^2+2x-3/2<\sqrt{D_1}=\sqrt{16x^4+16x^3-8x^2-7}<4x^2+2x-1$, which is equivalent to $\sqrt{D_1}=4x^2+2x-3/2+\epsilon$ for some $0<\epsilon<1/2$ and then 
\[ n\in \left[ x+\frac{5}{4}-\frac{\epsilon}{2},4x^2+3x-\frac{1}{4}+\frac{\epsilon}{2} \right] \cap \{ 4x^2-x,\dots,4x^2+3x-3 \}=\{ 4x^2-x,\dots,4x^2+3x-3\} \]

Let $C$ be a colour class of size $x$. This class is a matching with $2x$ vertices and $ {\tbinom{2x}{2}} - x $ edges in the induced subgraph $<C>$ that are not in $C$. Therefore, each one of these edges has a different colour and each one of these colours is in a class with more than $x$ edges because they are adjacent to two edges of $C$.

\begin{figure}[!htbp]
\begin{center}
\includegraphics{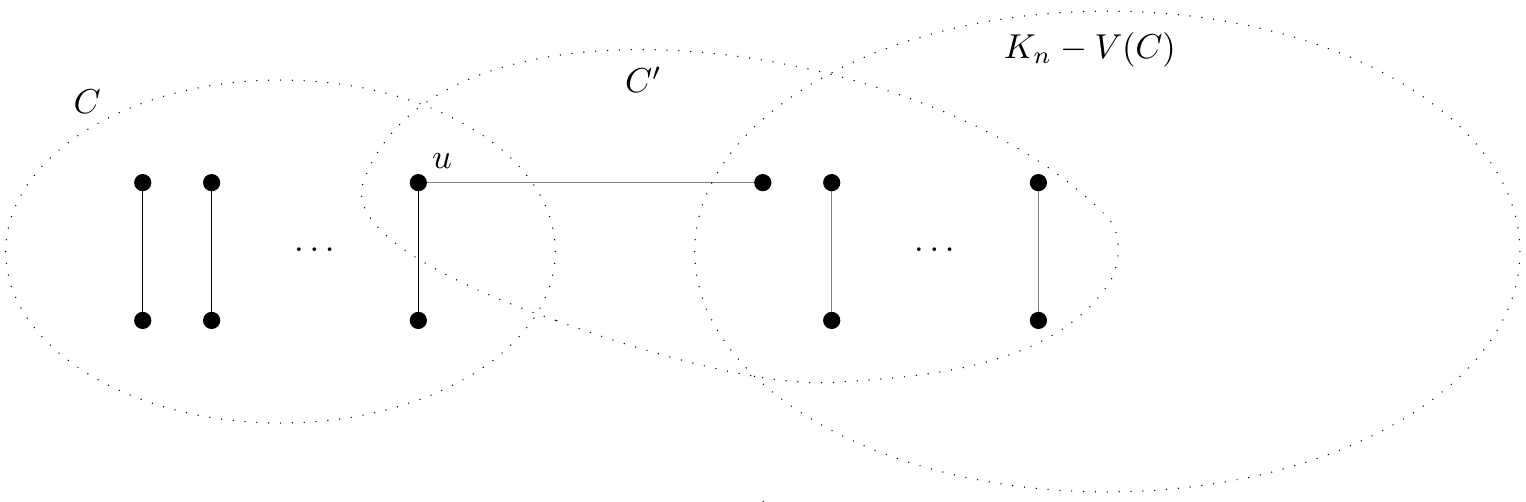}
\caption{\label{Fig1} $K_{n}$}
\end{center}
\end{figure}

Let $C'$ be another colour class of size $x$. $C'$ meets $C$ in a vertex $u$. In Fig \ref{Fig1} we give a description of $K_{n}$. The rest of the $ 2x-2 $ edges meets $ C' $ through $u$ in only one vertex and they have different colours and also their colour classes are larger than $x$ because they meet two vertices of $ C' $. As before there are at least $ g_n(x)-\{ {\tbinom{n}{2}} - x(g_n(x))\}  $ colour classes of size $x$ then there are at most $ (2x-2)(g_n(x)-\{ {\tbinom{n}{2}} - x(g_n(x))\} -1) $ chromatic classes of size greater than $x$ and, hence, we have the following:

\[ {\tbinom{n}{2}} - x(g_n(x)) \geq {\tbinom{2x}{2}} - x + (2x-2)(g_n(x)-\{ {\tbinom{n}{2}} - x(g_n(x))\} -1)\]

Therefore,
\[ (2x-1)n^2-(8x^3+4x^2-6x-1)n+8x^4+12x^3-12x^2-2x \geq 0\]
\[ \textrm{i.e., }\left(n-\frac{8x^{3}+4x^{2}-6x-1-\sqrt{D_2}}{4x-2}\right)\left(n-\frac{8x^{3}+4x^{2}-6x-1+\sqrt{D_2}}{4x-2}\right) \geq 0 \]

where $ \sqrt{D_2}=\sqrt{64x^6-144x^4+80x^3-4x^2+4x+1}=8x^3-9x+5+r_2 $ and then 

\[ \left(n-(x+\frac{5}{4}+r_3)\right)\left(n-(4x^2+3x-\frac{9}{4}+r_4)\right) \geq 0 \]

\[ \textrm{i.e., }n \in \left[4x^{2}+3x-\frac{9}{4}+r_4,\infty \right) \cap \{ 4x^2-x,\dots,4x^2+3x-3 \}= \emptyset \]

Then we have a contradiction and we conclude that if $x\geq2$ is an integer and $n\in \{4x^2-x,\dots,4x^2+3x-3\}$ then
\[\psi(n) \leq g_n(x)-1\]
\end{proof}

\subsection{Proof of Theorem \ref{teo1}.}

In order to prove Theorem \ref{teo1} we only need to show that $\psi(m_a)\geq (m_a-a)q$. We will do this by exhibiting a complete edge-colouring  of $K_{m_a}$ with $(m_a-a)q$ colours.

For the construction of such an edge-colouring,  we need some definitions and remarks.

A \emph{projective plane} consists of a set of $n$ points, a set of lines, and an incidence relation between points and lines having the following properties:
\begin{enumerate} \item Given any two distinct points there is exactly one line incident with both of them.
\item Given any two distinct lines there is exactly one point incident with both of them.
\item There are four points, such that no line is incident with more than two of them. \end{enumerate}
Such plane has $n= q^2 + q+ 1$ points (for some number $q$) and $n$ lines; each line contains $q+1$ points and each point belongs to $q+1$ lines. The number $q$ is called the \emph{order} of the projective plane. A projective plane of order $q$ is called $\Pi_q$. If $q$ is a prime power there exists $\Pi_q$, which is called the \emph{algebraic projective plane} since it arises from finite fields.

Let $\P$ be the set of points of $\Pi_q$ and let $\L= \{\l_1,\dots, \l_n\}$ be the set of lines of $\Pi_q$. Now, we will identify the points of $\Pi_q$ with the set of vertices of the complete graph $K_n$.  In a natural way, the set of points of each line of $\Pi_q$  induces a subgraph isomorphic to $K_{q+1}$ in $K_n$.  For each line $\l_i\in \L$, let $l_i = (V(l_i), E(l_i))$ be the subgraph of $K_n$ induced by the set of $q+1$ points of $\l_i$.  By the properties of the projective plane, for every pair $\{i,j\}\subseteq \{1,\dots,n\}$, $|V(l_i)\cap V(l_j)| =1$ and $\{E(l_1), \dots, E(l_n)\}$ is a partition of $E(K_n)$.  In this way, when we say that a graph $G$ isomorphic to $K_n$ is a \emph{representation of the projective plane} $\Pi_q$, we will understand that $V(G)$ is identified with the  points of $\Pi_q$ and that there is  a family of subgraphs (lines) $\{l_1, \dots, l_n\}$ of $G$ such that for each  line $\l_i$  of $\Pi_q$, $l_i$ is the subgraph induced by the set of points  of $\l_i$.

Given two graphs $G$ and  $H$,  the directed sum, $G\oplus H$, is defined as the graph with vertex set $V(G)\cup V(H)$ and edge set $E(G)\cup E(H)\cup (V(G) \times V(H))$ (the set of edges $V(G) \times V(H)$ for short; we write it as $V(G)V(H)$-edges). Given $S\subseteq V(G)$, $G\setminus S$ is the subgraph of $G$ induced by $V(G)\setminus S$.  

Let $m$ be a positive integer. Given an edge-colouring $\Gamma\colon E(K_m)\rightarrow \mathcal{C}$, we will say that a vertex $x\in V(K_m)$ is an \emph{owner} of a set of colours $\mathcal{C}'\subseteq \mathcal{C}$  whenever for every $c\in \mathcal{C}'$ there is $y\in V(K_m)$ such that $\Gamma(xy) = c$; and given a subgraph $G$ of $K_m$, we will say that $G$ is an \emph{owner} of a set of colours   $\mathcal{C}'\subseteq \mathcal{C}$ if each vertex of $G$ is an owner of $\mathcal{C}'$. By this, $\Gamma$ is a complete edge-colouring if for every pair of colours in $\mathcal{C}$ there is a vertex in $K_m$ which is an owner of both colours.  

\begin{lemma}\label{owner} Let  $n= q^2 + q+ 1$, with $q$ a natural number such that $ \Pi_q $ exists, and  let $t$ be a positive integer. Let $G$ be a subgraph  of $K_{n+t}$  isomorphic to $K_n$ and let $G$ be a representation of $\Pi_q$.  Let $\Gamma\colon E(K_{n+t})\rightarrow \mathcal{C}$ be an edge-colouring of $K_{n+t}$. Suppose that each line $l_i$  of $G$ is an owner of a set of colours $\mathcal{C}_i\subseteq \mathcal{C}$. Then for every pair of colours $\{c_1, c_2\}\subseteq \bigcup\limits_{i=1}^{n} \mathcal{C}_i$ there is $x\in V(G)$ which is an owner of $c_1$ and $c_2$.\end{lemma}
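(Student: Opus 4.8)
The plan is to read the conclusion straight off the second incidence axiom of a projective plane: any two distinct lines of $\Pi_q$ meet in exactly one point, and translate "line owns $\mathcal{C}_i$" plus "two lines share a point" into "that shared point owns colours coming from both lines." So, given a pair $\{c_1,c_2\}\subseteq\bigcup_{i=1}^{n}\mathcal{C}_i$, I would begin by choosing indices $i$ and $j$ with $c_1\in\mathcal{C}_i$ and $c_2\in\mathcal{C}_j$; such indices exist precisely because $c_1$ and $c_2$ lie in the displayed union.

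If $i\neq j$, then since $G$ is a representation of $\Pi_q$, the subgraphs $l_i$ and $l_j$ are lines of $G$ with $|V(l_i)\cap V(l_j)|=1$; let $x$ be this unique common vertex. Because $x\in V(l_i)$ and $l_i$ is an owner of $\mathcal{C}_i$, the vertex $x$ is an owner of $\mathcal{C}_i$, hence of $c_1$; symmetrically $x\in V(l_j)$ makes $x$ an owner of $c_2$. Thus $x\in V(G)$ owns both $c_1$ and $c_2$, as required.

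If instead $i=j$, then $c_1,c_2\in\mathcal{C}_i$, and any vertex $x$ of $l_i$ — the line is nonempty, having $q+1\geq 2$ vertices — owns both $c_1$ and $c_2$ because $l_i$ is an owner of $\mathcal{C}_i$. In both cases the desired vertex has been produced.

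As for where the difficulty lies: there is essentially none in this lemma, which is just a repackaging of the projective-plane axioms in the owner terminology; the only point to be careful about is not to forget the degenerate case $i=j$ (handled above), since the intersection argument needs two \emph{distinct} lines. The real work of the section will be elsewhere — namely, using this lemma together with a suitable colouring of the $t$ added vertices and their incident edges so that \emph{every} pair of colours of $\mathcal{C}$, not merely those arising from the $\mathcal{C}_i$, is owned by some vertex, which is what upgrades $\Gamma$ to a complete edge-colouring and yields the bound $\psi(m_a)\geq(m_a-a)q$.
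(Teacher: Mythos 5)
Your proposal is correct and follows essentially the same argument as the paper: split into the case where both colours lie in a single $\mathcal{C}_i$ (any vertex of $l_i$ works) and the case $i\neq j$, where the unique intersection point of the two lines owns both colours. Nothing is missing.
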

\begin{proof} Let $\{c_1, c_2\}\subseteq \bigcup\limits_{i=1}^{n} \mathcal{C}_i$. If there is $i\in \{1,\dots,n\}$ such that $\{c_1, c_2\}\subseteq \mathcal{C}_i$, then since $l_i$ is an owner of $\mathcal{C}_i$ it follows that each $x\in V(l_i)$ is an owner of $c_1$ and $c_2$. If $c_1\in \mathcal{C}_i$ and $c_2\in\ \mathcal{C}_j$ ($i\not=j$), there is $x\in V(G)$ such that $x= V(l_i)\cap V(l_j)$, and then since $l_i$ and $l_j$ are owners of $\mathcal{C}_i$ and $\mathcal{C}_j$, respectively, $x$ is an owner of $c_1$ and $c_2$.\end{proof}
 
Now, we define different edge-colourations for some special graphs that will be used later. 

It is well known (see \cite{BM}, \cite{H}) that any complete graph of even order $ r $ admits a $ 1 $-factorization and that any complete graph of odd order $ r $ admits a $ 2 $-factorization by Hamiltonian cycles.

\begin{definition} Let $r$ be an even integer. An edge-colouring $\Gamma\colon E(K_r) \rightarrow \{1,2,\dots,r-1\}$ will be said to be of \emph{Type 1} if for every $i\in \{1,2,\dots,r-1\}$ the set $\{ xy\in E(K_r) \colon \Gamma(xy) = i\}$ is a perfect matching of $K_r$.\end{definition}

\begin{definition} Let $r$ be an odd integer. An edge-colouring $\Gamma \colon E(K_r) \rightarrow \{1,\dots,r\}$ will be said to be of \emph{Type 2}  if we obtain $\Gamma$ in the following way: Let $G$ be the graph (isomorphic to $K_{r+1}$) obtained by adding to $K_r$ a new vertex $x_0$  and all the $x_0V(K_r)$-edges. Let $\Gamma'$ be an edge-colouring of Type 1 of $G$ and, for every $e\in E(K_r)$, let $\Gamma(e) := \Gamma'(e)$. \end{definition}

\begin{definition} Let $r$ be an odd integer and $x,y\in V(K_r)$. An edge-colouring $\Gamma \colon E(K_r-xy) \rightarrow \{1,\dots,r-2\}$ will be said to be  of \emph{Type 3} if we obtain $\Gamma$ in the following way: Let $G$ be the graph (isomorphic to $K_{r-1}$) obtained by deleting the vertex $x$  and all the $xV(K_{r-1})$-edges. Let $\Gamma'$ be an edge-colouring of Type 1 of $G$ and, for every $e\in E(K_r-xy)$, let $\Gamma(e) := \Gamma'(e)$ if $ e\in E(G) $, and $ \Gamma(xw) := \Gamma'(yw)$ for every  $ w\in V(G)-y $. \end{definition}

\begin{definition} Let $r$ be an odd integer. An edge-colouring $\Gamma_i \colon E(C_r) \rightarrow \{i,i+\frac{r-1}{2}\}$ will be said to be of \emph{Type 4} if we obtain $\Gamma_i$ in the following way: Let $G$ be the graph (isomorphic to $P_{r}$) obtained by deleting the edge $x_0y\in E(C_r)$. Let $\Gamma'_i\colon E(G) \rightarrow \{i,i+\frac{r-1}{2}\}$ be a proper edge-colouring of $G$ (remember that \emph{proper} means that each vertex has different colours in its edges) and, for every $e\in E(C_r)$, let $\Gamma_i(e) := \Gamma'_i(e)$ be if $ e\in E(G) $, and $ \Gamma_i(x_0y) := \Gamma'_i(x_0w)$ for $ w = N(x_0)-y $. Observe that $ x_0 $ is an owner of one colour. \end{definition}

\begin{definition} Let $r$ be an odd integer. An edge-colouring $\Gamma \colon E(K_r) \rightarrow \{1,\dots,r-1\}$ will be said to be of \emph{Type 5} in $ x_0 $ if we obtain $\Gamma$ in the following way: Let $ \{G_1,\dots,G_{\frac{r-1}{2}}\} $ be a $ 2 $-factorization of $ K_r $ such that $ G_i=C_r $ for each $ i\in \{ 1,\dots,\frac{r-1}{2} \} $ and $ x_0 $ is the same in each $ G_i $. Let $\Gamma_i$ be a edge-colouring of $G_i$  of Type 4 and, for every $e\in E(K_r)$, let $\Gamma(e) := \Gamma_i(e)$ be if $ e\in G_i $. Observe that $ x_0 $ is an owner of $ \tfrac{r-1}{2} $ colours. \end{definition}

\subsection{The edge-colouring}

\begin{proof}[Proof of Theorem \ref{teo1}] \label{colo1}
To prove this theorem we will exhibit a complete edge-colouring of $K_{m_a}$ with $(m_a-a)q$ colours. The cases for $a\in \{-1,0,1,2\}$ are given in \cite{AMS,AMRS}.  Let $ a\in \{3,4,\dots,\tfrac{q}{2}+1 \} $. Let $\mathcal{C}$ be a set of $(m_a-a)q$ colours and let $\{\mathcal{C}_1, \mathcal{C}_2,\dots, \mathcal{C}_{n}\}$ be a partition of $\mathcal{C}$ in the following way: $\mathcal{C}_i$ is a set of $q$ colours, for $1\leq i \leq q-2a+3$; $ \mathcal{C}_i $ is a set of $ q-1 $ colours, for $ q-2a+4\leq i \leq a(q-1)+q+1 $; $\mathcal{C}_i$ is a set of $q+1$ colours, for $a(q-1)+q+2\leq i \leq q^2 + q$ and $ \mathcal{C}_{n} $ is a set of $ q-1 $ colours. 

Let $G$ be a subgraph of $K_{m_a}$ isomorphic to $K_n$ and let $H = K_{m_a}\setminus V(G)$. Clearly $H$ is isomorphic to $K_{q-a}$ and $K_{m_a} = G\oplus H$.  Let $G$ be a representation of $\Pi_q$ and let $L=\{l_1, \dots, l_{n}\}$ be the set of lines of $G$.

Let $V(H)=\{h_1, \dots, h_{q-a}\}$, let $v_0\in V(G)$ and let $\ell$ be a line $l$ of $G$ such that $v_0\notin V(\ell)$. 

Let $ W $, $ U $ and $ V $ be a partition of $ V(\ell) $ such that $ W=\{w_1,\dots,w_{q-2a+3}\}$, $ U=\{u_1,\dots,u_{a-2}\} $ and $ V=\{v_1,\dots,v_a\} $, then $\ell=<W>\oplus <U>\oplus <V>$. Let $ L_0= \{ \ell_{x} \colon x\in \ell \hbox{ and } v_0\in\ell_x  \} \subseteq L $. Let $ L_W $, $ L_U $ and $ L_V $ be a partition of $ L_0 $ such that $ L_W = \{ \ell_x \colon x \in W \} $, $ L_U = \{ \ell_x \colon x \in U \} $ and $ L_V = \{ \ell_x \colon x \in V \} $.

For $ i\in \{ 1,\dots,a \} $, let $ L_i = \{ \ell^{v_i}_{j}\colon v_i\in\ell^{v_i}_{j},j\in \{ 1,\dots,q-1 \} \}$ be the set of lines $ l\not=\ell $ such that $ l\notin L_0 $. For $ i\in \{ 1,\dots,a \} $ and $ j\in \{ 1,\dots,q-1 \} $, let $ Z= \{z^{v_i}_j = \ell^{v_i}_j \cap \ell_{v_{i+1}} \colon i\not=a \} \cup \{ z^{v_a}_j = \ell^{v_a}_j \cap \ell_{v_{1}} \} \subseteq V(G) $ and let $ Y = V(G)- Z\cup V(\ell) \cup \{v_0\} $.

Without loss of generality, let $ l_i = \ell_{w_i}$ for $ i\in \{ 1,\dots,q-2a+3 \} $, $ l_{i+q-2a+3}=\ell_{u_i} $ for $ i\in \{ 1,\dots,a-2 \} $, $ l_{i+q-a+1}=\ell_{v_i} $ for $ i\in \{ 1,\dots,a \} $ and let $ l_{i(q-1)+2+j} = \ell^{v_i}_j $ for $ i\in \{ 1,\dots,a \} $ and $ j\in \{ 1,\dots,q-1 \} $, $ L' = \{ l_{i}\colon a(q-1)+q+2\leq i \leq q^2+q \}$ and $ \ell=l_{n} $. In Fig \ref{Fig2} we give a description of $K_{m_a}$.

\begin{figure}[!htbp]
\begin{center}
\includegraphics{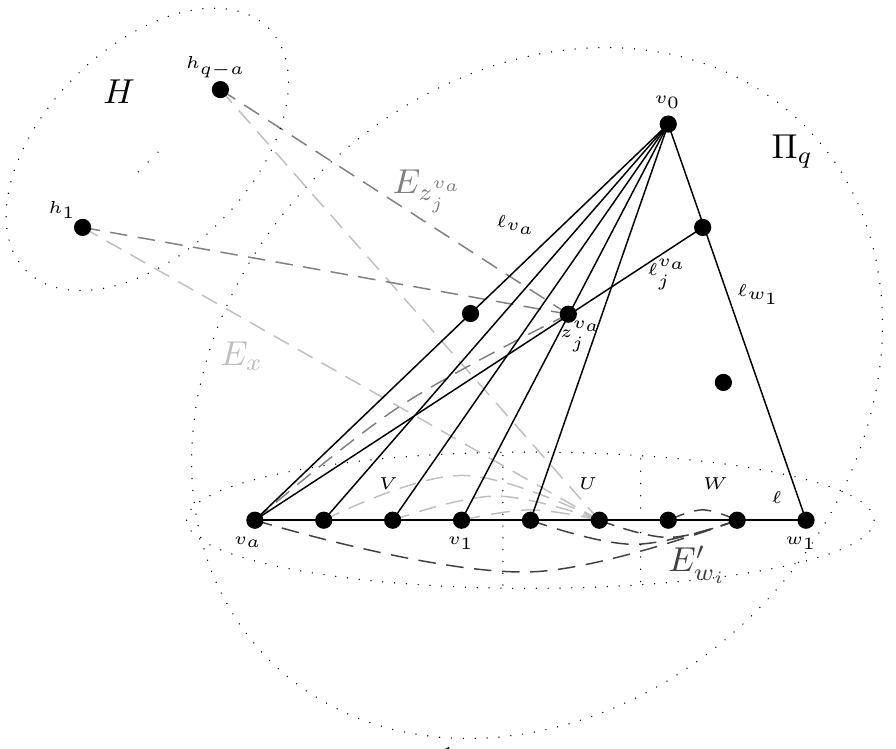}
\caption{\label{Fig2} $K_{m_a}$.}
\end{center}
\end{figure}

With the aim of defining some subset of edges of $ K_{m_a} $ we first define a special function $ h $:

For $ i\in \{1,\dots,q-2a+3\} $ and $ j\in \{ 1,\dots,\tfrac{q}{2}-a+1 \} $, let \[ h(i+j)=\begin{cases}
i+j & \textrm{if }i+j\leq q-2a+3\\
i+j-(q-2a+1) & \textrm{if }i+j>q-2a+3
\end{cases} \]
 and let \[ E'_{w_i}=\{ w_iw_{h(i+1)},\dots,w_iw_{h(i+ \frac{q}{2}-a+1)},w_iu_1,\dots,w_iu_{a-2},w_iv_a\} \] be a set of $ \tfrac{q}{2} $ edges. 

Now, we will define other subsets of edges of $ K_{m_a} $. 

For $ x \in U\cup W $, let \[ E_{x}=\{ xv_1,\dots,xv_{a-1},xh_1,\dots,xh_{q-a} \} \] be a set of $ q-1 $ edges. For each $ z^{v_i}_j $, let \[ E_{z^{v_i}_j}=\{ z^{v_i}_jv_i, z^{v_i}_jh_1,\dots,z^{v_i}_jh_{q-a} \} \] be a set of $ q-a+1 $ edges. Let \[ E'=\{ v_0v_a,v_0u_1,\dots,v_0u_{a-2},v_au_1,\dots,v_au_{a-2} \}\cup E(<U>) \] be a set of $ \tbinom{a}{2} $ edges.

We begin by colouring the edges of $K_{m_a}$ in the following way:

\begin{enumerate}

\item For $l_i\in L_W$, let $\Gamma_i\colon E(l_i)\rightarrow \mathcal{C}_i$ be an  edge-colouring of Type 5 in $ w_i $ and let $\mathcal{C}(w_i)$ be the subset of $ \frac{q}{2} $ colours of $ \mathcal{C}_i$ which $ w_i $ is not an owner then we assign exactly the colours of $ \mathcal{C}(w_i) $ to the set $E'_{w_i}$.

In this way, each line $l_i\in  L_W$ is an owner of $\mathcal{C}_i$, and we have assigned a colour to each edge of $\bigcup\limits_{i=1}^{q-2a+3}(E(l_i)\cup E'_i)$.

\item For each $l_i\in L_U\cup L_V$ let $l_i'$ be the subgraph of $G$ obtained by deleting the edge $v_0u_i$ from $l_i$ if $ i\in \{ q-2a+4,\dots,q-a+1 \} $ and the edge $v_0v_i$ from $l_i$ if $ i\in \{ q-a+2,\dots,q+1 \} $. Let $\Gamma_i\colon E(l_i')\rightarrow \mathcal{C}_i$ be an  edge-colouring of Type 3. 

For each $l_j\in L_i$, let $l_j'=l_j\setminus E_{z^{v_i}_j}$ be and let $\Gamma_j\colon E(l_j')\rightarrow \mathcal{C}_j$ be an  edge-colouring of Type 3. 

Now, each line $l_j$ in  $L_{i}$ is an owner of $\mathcal{C}_j$, and at this point we have assigned a colour to each edge of $\bigcup\limits_{i=1}^{q-2a+3}(E(l_i)\cup E'_i) \cup \bigcup\limits_{i=q-2a+4}^{a(q-1)+q+1}(E(l'_i)$.

\item For each $l_i\in L'$ let $\Gamma_i\colon E(l_i)\rightarrow \mathcal{C}_i$ be an  edge-colouring of Type 2. For each $l_i\in L'$, and for each $x\in V(l_i)$, let $c(x, l_i)$ be the only colour $c\in \mathcal{C}_i$ such that for every $y\in V(l_i-x)$, $\Gamma_{i}(xy)\not=c$. Observe that $\bigcup\limits_{x\in V(l_i)} c(x, l_i) = \mathcal{C}_i$. For each $x$ in $ L' $ let $c(x) = \{ c(x, l_i) \colon x\in V(l_i) \hbox{ and } l_i\in L'\}$. 

For each $ y $ in $ Y $ there are $a+1$ lines $l\notin L'$ such that $y\in V(l)$, then $c(y)$ is a set of $q-a$ colours. Colour the set of $q-a$  edges $\{yh_1,\dots,yh_{q-a} \}$ with the set of  colours $c(y)$.

For each $ z $ in $ Z $ there are $a$ lines $l\notin L'$ such that $z\in V(l)$, then $c(z)$ is a set of $q-a+1$ colours. Colour the set of $q-a+1$ edges $E_z$ with the set of  colours $c(z)$.

For each $ x \in U\cup V $ there are $2$ lines $l\notin L'$ such that $x\in V(l)$, then $c(x)$ is a set of $q-1$ colours. Colour the set of $q-1$ edges $E_x$ with the set of  colours $c(x)$.

Now it just remains to assign colours to the edges $ H\oplus<V>\oplus \{ v_0 \} $ and $ E' $.

\item Let $H' = H\oplus<V>\oplus v_0$ be and let $\Gamma_{n}\colon E(H'-v_0v_a)\rightarrow \mathcal{C}_{n}$ be an edge-colouring of Type 3. In this way, $H'$ is an owner of $ \mathcal{C}_{n}$. 

\item Let $\Gamma\colon E'\rightarrow \{c\}$ be an edge-colouring where $ c \in \mathcal{C}$.

We have already assigned a colour to each edge in $K_{m_a}$. If $\{c_1, c_2\}\subseteq \bigcup\limits_{i=1}^{n-1} C_i$, then since every line $l_i$ in $G$ is an owner of $C_i$, by Lemma \ref{owner} it follows that there is $x\in V(G)$ which is an owner of both colours. Analogously, if $\{c_1, c_2\}\subseteq C_{n}$, since $H'$ is an owner of $C_{n}$, there is $x\in V(H')$ which is an owner of both colours. Let us suppose  $c_1\in \bigcup\limits_{i=1}^{n-1} C_i$ and $c_2\in C_{n}$. If $c_1\in C_j$ with $ 1 \leq j \leq a(q-1)+q+1$, there is a vertex $ x \in V\cap V(l_j) $ and $x$ is an owner of $c_1$, and since $x\in V(H')$, $x$ is also an owner of $c_2$. If $c_1\in C_j$ with $a(q-1)+q+2\leq j\leq q^2+ q$, there is a vertex $x\in V(l_j)$ such that  $c(x, l_j)= c_1$ and, by construction, there is $y\in V(H')$ such that $\Gamma_j(xy)= c_1$. Hence $y$ is an owner of $c_1$ and since $y\in V(H')$, $y$ is an owner of $c_2$. Therefore, $\Gamma$ is a complete edge-colouring of $K_{m_a}$ and the theorem follows.
\end{enumerate}
\end{proof}

\section{Proof of  Theorem \ref{cor2}.}

In order to prove Theorem \ref{cor2} we need to show that $ \psi(m_a) \leq (m_a-a)q$ for $ a \in \{ 3,\dots, \left\lceil \frac{1+\sqrt{4q+9}}{2}\right\rceil -1\} $, newly the cases for $a\in \{-1,0,1,2\}$ are given in \cite{AMS,AMRS}.

To begin with, we prove the  following lemma.

\begin{lemma}\label{upper} Let $q$ be an even natural number, let $ a\in \{ 3,4,\dots,\frac{q}{2}+1 \} $ and $ m_a=(q+1)^2-a $, then \[\psi(m_a) \leq f_{m_a}(\frac{q}{2}).\]\end{lemma}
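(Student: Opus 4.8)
The plan is to deduce the lemma directly from Proposition \ref{prop1} (equivalently, from the third line of Theorem \ref{cor3}), by checking that $m_a$ falls inside the range of $m$ for which the $f$-bound is in force, when the free parameter there is taken to be $x=q/2$. First I would set $x:=\tfrac{q}{2}$; since $q$ is even this is an integer, and since the index set $\{3,4,\dots,\tfrac{q}{2}+1\}$ is non-empty we must have $q\ge 4$, hence $x\ge 2$, which is exactly the hypothesis needed to invoke Proposition \ref{prop1}. Then I rewrite $m_a=(q+1)^2-a=4x^2+4x+1-a$.

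Next I would verify the interval inclusion. As $a$ ranges over $\{3,4,\dots,x+1\}$, the quantity $m_a=4x^2+4x+1-a$ ranges over the set $\{4x^2+3x,\dots,4x^2+4x-2\}$, the minimum value $4x^2+3x$ being attained at $a=x+1$ and the maximum value $4x^2+4x-2$ at $a=3$. Since
\[
4x^2+4x-2 \;\le\; 4x^2+7x+2 \;=\; 4(x+1)^2-(x+1)-1,
\]
it follows that $m_a\in\{4x^2+3x,\dots,4(x+1)^2-(x+1)-1\}$ for every admissible $a$; that is, $m_a$ lies in precisely the interval on which Proposition \ref{prop1} asserts $\psi(m)\le f_m(x)$. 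Applying that proposition with parameter $x=\tfrac{q}{2}$ therefore yields $\psi(m_a)\le f_{m_a}(\tfrac{q}{2})$, which is the statement of the lemma.

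There is essentially no substantial obstacle here: the whole argument is the endpoint check above (verifying both extremes $a=3$ and $a=x+1$ land the value $m_a$ inside the third interval of Proposition \ref{prop1}), together with the remark that the hypotheses force $q\ge 4$ so that $x=q/2$ is a legitimate integer parameter $\ge 2$. If one preferred not to cite Proposition \ref{prop1} in this packaged form, an alternative would be to start from the two basic bounds $\psi(m)\le\min\{g_m(x_0),f_m(x_0)\}$ with $x_0$ the size of a smallest chromatic class of an optimal complete edge-colouring of $K_{m_a}$, and split into the cases $x_0\ge q/2$ (use monotonicity of $f_{m_a}$) and $x_0\le q/2-1$ (use monotonicity of $g_{m_a}$ on $[1,(m_a-1)/2]$, followed by the inequality $g_{m_a}(q/2-1)\le f_{m_a}(q/2)$, which reduces to a routine polynomial estimate in $q$ and $a$); but the interval-inclusion route is shorter and is the one I would present.
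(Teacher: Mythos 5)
Your proof is correct and follows essentially the same route as the paper: set $x=\tfrac{q}{2}$ and check that $m_a$ lies in the interval $\{4x^2+3x,\dots,4(x+1)^2-(x+1)-1\}$, so the $f$-bound of Proposition \ref{prop1} (equivalently the third case of Theorem \ref{cor3}) applies; the paper's own two-line proof verifies exactly these endpoint identities ($m_{q/2+1}=4x^2+3x$ and $m_3+\tfrac{3}{2}q+4=4(x+1)^2-(x+1)-1$). Your additional remark that the hypotheses force $q\ge 4$, hence $x\ge 2$, is a harmless explicit check of the same fact the paper leaves implicit.
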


\begin{proof} If $ x=\frac{q}{2} $ then $ m_{\frac{q}{2}+1}=4x^2+3x $ and  $ m_{3}+\frac{3}{2}q+4=4(x+1)^2-(x+1)-1 $. By Theorem \ref{cor3} the lemma follows.\end{proof}

Finally, we prove the following.

\begin{proof}[Proof of Theorem \ref{cor2}]

By lemma \ref{upper} we know that $\psi(m_a)\leq f_{m_a}(\tfrac{q}{2})$, and \[ f_{m_a}(\tfrac{q}{2})= \left\lfloor \tfrac{q^4+4q^3+(5-2a)q^2+2(1-a)q+a^2-a}{q+2} \right\rfloor = (m_a-a)q + \left\lfloor \frac{\binom{a}{2}}{\frac{q}{2}+1} \right\rfloor\]

On the other hand, \[ \left\lfloor \frac{\binom{a}{2}}{\frac{q}{2}+1} \right\rfloor = 0 \Leftrightarrow \frac{a^2-a}{q+2} < 1  \Leftrightarrow a^2-a-(q+2)<0 \Leftrightarrow \left(a-\frac{1-\sqrt{4q+9}}{2}\right)\left(a-\frac{1+\sqrt{4q+9}}{2}\right)<0 \]

so, $ \frac{1-\sqrt{4q+9}}{2} < a < \frac{1+\sqrt{4q+9}}{2} $ and then $ a \in \{ 3,\dots, \left\lceil \frac{1+\sqrt{4q+9}}{2}\right\rceil -1\} $. 

By Theorem \ref{teo1}, it follows that $(m_a-a)q\leq \psi(m_a)\leq (m_a-a)q$ and the result follows.
\end{proof}


\end{document}